\documentclass{amsproc}

\newtheorem{theorem}{Theorem}[section]
\newtheorem{lemma}[theorem]{Lemma}

\theoremstyle{definition}
\newtheorem{definition}[theorem]{Definition}

\theoremstyle{remark}
\newtheorem{remark}[theorem]{Remark}

\numberwithin{equation}{section}

\usepackage{amssymb, xcolor, enumerate}
\usepackage[
backend=biber,
style=alphabetic,
sorting=ynt,
doi=false,
isbn=false,
url=false,
eprint=false
]{biblatex}
\title{Noncommutative algebraic sets are not generically nc manifolds}
\author{Jiachen Lu}

\begin{document}
	
	\begin{abstract}
		A consequence of implicit function theorem is that any algebraic set is a complex manifold except at some singular points which form a strictly lower dimension algebraic subset. Accordingly, an non commutative implicit function theorem of Agler-McCarthy might suggest that a nc algebraic set is genetically an nc manifold. However, in this paper, a type of nc polynomials is shown, whose zero locus cannot be nc manifolds.
	\end{abstract}
    \maketitle
    \keywords{non-commutative functions, NC manifold.}\\
    \AmS{ 15A54, 58J42, 47J07.}

	\section{Introduction}
		The study of solutions to polynomial equations in matrix variables has been actively developed in recent years. For example, a free model theory was developed and enriched for graded nc functions in \cite{ncokaweil,ncrealization2,ncrealization3} to show the non-commutative version of Pick's interpolation and the Oka-Weil Theorem. And the nc Oka-weil theorem, that a free holomorphic function can be uniformly approximated by free polynomials, enabled lots of results to be built on the non-commutative setting, such as Pick's interpolation for free holomorphic functions \cite{ncPickInter} and the representation of free Herglotz functions \cite{ncHerglotz}
		\newline
		Apart from the complex analysis point of view, there are also interesting results in the nc setting of differential geometry. An nc manifold was defined and studied in \cite{ncManifold}. It was shown that non-commutative symmetric functions in two variables are in bijective correspondence with holomorphic functions on an nc manifold with matrix dimension of two, which suggests the potential of using nc manifolds to study geometric properties of nc functions.
		\newline
		In the context of non-commutative function theory, the \textit{n-dimensional non-commutative universe} $\mathbb{M}^n$ is the space of n-tuples of square matrices, that is 
		\begin{equation*}
			\mathbb{M}^n = \bigcup_{i=1}^{\infty}\mathbb{M}_d^n
		\end{equation*}
		where $\mathbb{M}_d$ denotes the $d\times d$ matrices over $\mathbb{C}$ and 
		\begin{equation*}
			\mathbb{M}_d^n = \{(x^1,\ldots,x^n)|\; x^r \in \mathbb{M}_d \text{ for all r} \}
		\end{equation*}
		By a free polynomial in n variables we mean a linear combination of words in n non-commuting variables, for example $p(X,Y) = I + XY - 2YX +XYX$ is a free polynomial.
		By a nc algebraic set we mean the zero locus in $\mathbb{M}_d^n$ of a collection of free polynomials. In the scalar case, which could be considered as $d=1$, an algebraic set is a manifold if some strictly lower dimensional singularities are removed, which is a consequence of implicit function theorem.
		\newline
		In the nc setting, there is also an implicit function theorem \cite{ncIFT}. When simplified to a single free polynomial in the $\mathbb{M}_n^2$ case, it says that the zero locus of $f(X,Y)$ can be locally written as the graph of a single variable function, that is $Z_f = \{(X,g(X))\}$ when 
		\begin{equation*}
			\lim_{t \rightarrow 0} \frac{f(X,Y+tH)-f(X,Y)}{t} \neq 0 \qquad \forall H \neq 0
		\end{equation*}
		This nc version of implicit function theorem suggests that the zero locus is generically an nc manifold except at some isolated singular point, as in the scalar case.
		\newline
		However, the purpose of this paper is to show the zero locus is not a nc manifold. Consider the algebraic set $V = \{(X,Y)|XY+YX=0\}$. We shall show that as a submanifold of $\mathbb{C}^{2d^2}$, it has dimension $d^2 + \lfloor \frac{d}{2}\rfloor$, therefore it cannot locally be $\{f(Z),g(Z)|Z\in \mathbb{M}_d\}$ for any functions $f$ and $g$. This follows from our main result
		\begin{lemma}\label{q ru}
			Let $q$ be a root of unity with $q^n = 1$. The non-commutative zero locus of $\{XY-qYX=0\}$ in $\mathbb{M}_d^2$ has dimension $d^2 + \lfloor\frac{d}{n}\rfloor$, and the commutative zero locus have dimension $d^2$.
		\end{lemma}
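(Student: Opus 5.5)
Throughout I take $q$ to be a \emph{primitive} $n$-th root of unity with $n\ge 2$; otherwise replace $n$ by the order of $q$. I read the ``commutative zero locus'' as the set of solutions that also satisfy $XY=YX$. For $q\neq 1$ these are exactly the pairs with $XY=YX=0$.

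\textbf{Strategy.} I will compute $\dim V$, where $V=\{(X,Y)\in\mathbb{M}_d^2: XY=qYX\}$, by projecting to the first coordinate. For fixed $X$, the fibre is the kernel of the Sylvester operator $L_X(Y)=XY-qYX$, which is a linear space. I stratify $\mathbb{M}_d$ by Jordan data: a finite combinatorial type together with the eigenvalue parameters. On each stratum $S$ the fibre dimension is constant, so $\dim V=\max_S(\dim S+\dim\ker L_X)$.

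\textbf{Ingredients.} I will use two classical facts. First, for Jordan blocks $J_a(\lambda)$ and $J_b(\mu)$, the equation $J_a(\lambda)Z=qZJ_b(\mu)$ has a solution space of dimension $\min(a,b)$ if $\lambda=q\mu$, and $0$ otherwise. Second, the orbit of $X$ has dimension $d^2-c(X)$, where $c(X)=\sum_\lambda\sum_j (\pi_\lambda'(j))^2$ is the centralizer dimension and $\pi_\lambda'$ is the conjugate of the partition of block sizes at $\lambda$. Similarly, $\sum_{k,l}\min(a_k,b_l)=\sum_j\pi'(j)\rho'(j)$.

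\textbf{Decomposition by classes.} Group the eigenvalues of $X$ into classes $\lambda\langle q\rangle$, treating $\{0\}$ separately. A stratum has one free parameter per nonzero class, since the eigenvalues inside a class are then determined. Hence
\[
\dim S+\dim\ker L_X \;=\; d^2+\sum_{\text{classes }C}\bigl(\epsilon_C+\ker_C-c_C\bigr),
\]
where $\epsilon_C=1$ for a nonzero class and $0$ for the zero class. Here $\ker_C=\sum_i\sum_j\pi_{i}'(j)\pi_{i+1}'(j)$, summed over the consecutive pairs $\lambda q^{i+1},\lambda q^i$ that both occur.

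\textbf{Bounding each class.} The key estimate is AM--GM: $\pi_i'(j)\pi_{i+1}'(j)\le\tfrac12(\pi_i'(j)^2+\pi_{i+1}'(j)^2)$.
\begin{itemize}
\item If the class contains all $n$ points $\lambda q^i$ (a full cycle), summing cyclically gives $\ker_C\le c_C$. The contribution is therefore $\le 1$, and such a class uses at least $n$ of the $d$ dimensions.
\item If the occurring eigenvalues form open chains, each chain has two end-terms that are only half-counted. This gives $\ker_C\le c_C-\tfrac12(c_{\rm first}+c_{\rm last})\le c_C-1$, so the contribution is $\le 0$. If several chains lie in one class, it is even smaller.
\item For the zero class, $\ker_C=c_C$ and $\epsilon_C=0$, so the contribution is $0$.
\end{itemize}
Summing, $\dim V\le d^2+\#\{\text{full cycles}\}\le d^2+\lfloor d/n\rfloor$.

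For the lower bound, take $X$ diagonal with $k=\lfloor d/n\rfloor$ blocks of eigenvalues $\lambda_s,q\lambda_s,\dots,q^{n-1}\lambda_s$, with the $\lambda_s$ generic, and the remaining eigenvalues generic and simple. This stratum has dimension $d^2-d+(d-k(n-1))$, and the fibre has dimension $kn$. The total is $d^2+k$.

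\textbf{Commutative locus.} For $XY=YX=0$, stratify by $r=\operatorname{rank}X$. The set of rank-$r$ matrices has dimension $2dr-r^2$. The matrix $Y$ must map $\mathbb{C}^d/\operatorname{im}X$ into $\ker X$, which gives a space of dimension $(d-r)^2$. The sum is $d^2$ for every $r$, so the dimension is $d^2$.

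\textbf{Main obstacle.} The hard step is the upper bound over degenerate strata: non-semisimple $X$ and coincident or zero eigenvalues. The key is to organise the bookkeeping by $q$-classes and to use the AM--GM inequality on conjugate partitions. I also need care with the open-chain estimate, in particular that $c_{\rm first},c_{\rm last}\ge1$ and that a single isolated eigenvalue contributes $1+0-c\le0$. Finally, I need to justify that the dimension of $V$ is the maximum over the finitely many stratum types of base dimension plus fibre dimension.
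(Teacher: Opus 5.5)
Your proposal is correct and follows essentially the paper's route. You stratify by the Jordan type of one variable (you use $X$, the paper uses $Y$) and count stratum dimension plus the Sylvester fibre via the $\min(a,b)$ block formula. You then bound each eigenvalue class's contribution by $1$ for a full $q$-cycle and $0$ otherwise, realize $d^2+\lfloor d/n\rfloor$ with diagonal $q$-cycles, and treat the commutative locus by the same rank count. Where you differ is in the details: your AM--GM argument on conjugate partitions is a cleaner proof of the paper's sum-of-minima inequality (Lemma~\ref{inequality} and Lemma~\ref{geninequality}), and your explicit open-chain bound $\ker_C\le c_C-1$, together with reading $n$ as the order of $q$, supplies steps the paper leaves implicit.
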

		When $q$ is not a root of unity, we show that
		\begin{lemma}\label{q nru}
			Let $q$ be an arbitrary complex number that is not a root of unity. Both the non-commutative zero locus of $\{XY-qYX=0\}$ and the commutative zero locus in $\mathbb{M}_d^2$ has dimension $d^2$.
		\end{lemma}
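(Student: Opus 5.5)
The plan is to compute the dimension of $V_q=\{(X,Y)\in\mathbb{M}_d^2 : XY=qYX\}$ by fibering over $X$. For fixed $X$ the equation is linear in $Y$: it is the Sylvester equation $XY-Y(qX)=0$. By the classical Frobenius--Sylvester count, its solution space has dimension
\[
\sum_{(\lambda,\mu):\,\lambda=q\mu}\ \sum_{k,l}\min(\alpha^\lambda_k,\alpha^\mu_l),
\]
where $\alpha^\lambda$ is the partition of Jordan block sizes of $X$ at $\lambda$, and the outer sum is over ordered pairs of eigenvalues of $X$ with $\lambda=q\mu$. Write $S(\alpha,\beta)=\sum_{k,l}\min(\alpha_k,\beta_l)$ and $c(\alpha)=S(\alpha,\alpha)$. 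Then the centralizer of $X$ has dimension $\sum_\lambda c(\alpha^\lambda)$.

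We stratify $\mathbb{M}_d$ by Jordan type together with the pattern of relations $\lambda_i=q\lambda_j$ among the eigenvalues. Each stratum is a union of conjugacy classes, so its dimension is
\[
d^2-\sum_\lambda c(\alpha^\lambda)+(\text{number of free eigenvalue parameters}).
\]
Since the fiber dimension is constant on a stratum, $\dim V_q$ is the maximum over strata of (stratum dimension) $+$ (fiber dimension). The lower bound $d^2$ is immediate: $X=0$ leaves $Y$ arbitrary.

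For the upper bound we use that $q$ is not a root of unity. Eigenvalue $0$ is related only to itself, and its contribution $c(\alpha^0)$ to the fiber cancels exactly with its contribution to the centralizer; being fixed, it also contributes no parameter. On the nonzero eigenvalues, draw an edge $\mu\to q\mu$ whenever both are eigenvalues. Because $q^m\neq1$ for all $m\ge1$, this graph has no loops or cycles, so it is a disjoint union of paths $\mu,q\mu,\dots,q^{m-1}\mu$, and each path carries exactly one free parameter. The key inequality, which I expect to be the main technical step, is
\[
S(\alpha,\beta)\le \tfrac12\bigl(c(\alpha)+c(\beta)\bigr).
\]
To prove it, write $S(\alpha,\beta)=\sum_k\alpha'_k\beta'_k$ and $c(\alpha)=\sum_k(\alpha'_k)^2$ in terms of the conjugate partitions, and apply AM--GM termwise.

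Summing over a path with vertices $v_1,\dots,v_m$, the net contribution of that path is
\[
-\sum_i c_{v_i}+\sum_{i=1}^{m-1}S_{v_iv_{i+1}}+1\le -\tfrac12\bigl(c_{v_1}+c_{v_m}\bigr)+1\le 0,
\]
since every $c\ge1$ (for $m=1$ the bound reads $-c_{v_1}+1\le0$). Hence every stratum contributes at most $d^2$, and $\dim V_q=d^2$.

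For the commutative zero locus, I read it as those points of $V_q$ with $XY=YX$. For $q\neq1$ (which holds here) this forces $XY=YX=0$. Fibering over $X$ of rank $k$: such $X$ form a variety of dimension $2dk-k^2$, and the admissible $Y$ are exactly the maps $\mathbb{C}^d/\operatorname{im}X\to\ker X$, a space of dimension $(d-k)^2$. The total is
\[
2dk-k^2+(d-k)^2=d^2
\]
for every $k$, so this locus also has dimension $d^2$. The main care needed is in the bookkeeping of the strata, in particular to check that the count of free eigenvalue parameters is exactly one per path and that zero decouples.
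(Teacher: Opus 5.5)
Your proposal is correct and is essentially the paper's argument. The paper likewise stratifies by Jordan data (of $Y$ rather than $X$, which is immaterial), adds orbit dimension, free eigenvalue parameters and the dimension of the linear solution space, and bounds the total with the $\min$-sum inequality, using that without roots of unity the eigenvalue chains are paths rather than cycles. Your conjugate-partition/AM--GM proof of $S(\alpha,\beta)\le\frac12(c(\alpha)+c(\beta))$ and your explicit endpoint bound $1-\frac12(c_{v_1}+c_{v_m})\le 0$ are cleaner than, and actually justify, the paper's one-line claim that dropping the wrap-around term makes the count ``strictly less than 1''.

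One caveat applies to both your argument and the paper's. You tacitly need $q\neq 0$, so that $qX$ has Jordan type $\alpha^\mu$ at $q\mu$ and $0$ pairs only with itself. Indeed the non-commutative claim is false at $q=0$ for $d\ge 2$, since $\{XY=0\}$ has dimension $d^2+\lfloor d^2/4\rfloor$.
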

		Notice that the general second order homogeneous nc polynomial in two variables can be reduced to the situation of the previous two theorems. We have the following corollary
		\begin{theorem}\label{main}
			Consider the second order homogeneous nc polynomial $aX^2+bXY+cYX+dY^2$ in $\mathbb{M}_d^2$. Let $\Delta = (b+c)^2-4ad$
			\begin{enumerate}[(i)]
				\item If $\Delta \neq 0$ and $n$ is the smallest integer such that $(-\frac{\sqrt{\Delta}+b-c}{\sqrt{\Delta}-b+c})^n=1$, the non-commutative zero locus has dimension $d^2 + \lfloor \frac{d}{n}\rfloor$, and the commutative zero locus has dimension $d^2$;
				\item If $\Delta \neq 0$ and $-\frac{\sqrt{\Delta}+b-c}{\sqrt{\Delta}-b+c}$ is not a root of unity, the non-commutative zero locus has dimension $d^2$, and so does the commutative zero locus;
				\item If $\Delta = 0$, all solutions commute and the dimension is $d^2$
			\end{enumerate}
		\end{theorem}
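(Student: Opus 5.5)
We reduce Theorem \ref{main} to Lemmas \ref{q ru} and \ref{q nru} through an invertible linear change of the variables $(X,Y)$. Such a change is a linear automorphism of $\mathbb{M}_d^2\cong\mathbb{C}^{2d^2}$. It therefore preserves the dimension of the zero locus, and it also preserves the property that a pair commutes, since $[\alpha X+\beta Y,\gamma X+\delta Y]=(\alpha\delta-\beta\gamma)[X,Y]$.

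\textbf{Step 1: factor the symmetric part.} Write the symmetrized form as $ax^2+(b+c)xy+dy^2=(\lambda_1 x+\mu_1 y)(\lambda_2 x+\mu_2 y)$. When $\Delta\neq 0$ the two linear factors are linearly independent, so $U=\lambda_1X+\mu_1Y$ and $W=\lambda_2X+\mu_2Y$ are new coordinates. (If $a=0$ or $d=0$ the factorization simply has a factor $x$ or $y$, and nothing changes.)

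\textbf{Step 2: rewrite the polynomial.} In the new coordinates the noncommutative polynomial takes the form $\alpha UW+\beta WU$. Its symmetric part is fixed by the factorization, which gives $\alpha+\beta=1$ after normalizing. Its antisymmetric part comes from the term $\frac{b-c}{2}(XY-YX)$, and since $[U,W]=(\lambda_1\mu_2-\mu_1\lambda_2)[X,Y]$ this gives $\alpha-\beta=(b-c)/(\lambda_1\mu_2-\mu_1\lambda_2)$. The determinant satisfies $(\lambda_1\mu_2-\mu_1\lambda_2)^2=\Delta$ for the normalized factorization, up to a choice of sign of $\sqrt{\Delta}$. Hence
\[
\frac{\alpha}{\beta}=\frac{\sqrt\Delta+b-c}{\sqrt\Delta-b+c}.
\]
Provided $\alpha,\beta\neq 0$, the equation becomes $UW-qWU=0$ with $q=-\beta/\alpha$. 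This is the inverse of the ratio in the theorem, and $q$ and $q^{-1}$ have the same order as roots of unity (swapping $U$ and $W$ also inverts $q$). Cases (i) and (ii) then follow directly from Lemmas \ref{q ru} and \ref{q nru}. The commutative locus $\{UW=0, [U,W]=0\}$ is handled by the same lemmas.

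\textbf{Degenerate subcases.} If $\alpha=0$ or $\beta=0$, i.e.\ $b-c=\pm\sqrt\Delta$, the ratio is $0$ or $\infty$, which is not a root of unity. The equation then reads $UW=0$, and we need a separate dimension count. The locus $\{UW=0\}$ has components $\{\operatorname{rank}U=r,\operatorname{rank}W\le d-r\}$, of dimension $r(2d-r)+(d-r)d$, which is maximized at $r=0$ or $r=d$ with value $d^2$; this matches (ii). I expect this, together with the sign and orientation bookkeeping in the identification of $q$, to be the main care point, since the theorem's formula must be matched exactly.

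\textbf{Step 3: case (iii).} If $\Delta=0$, the symmetric part is a perfect square $\ell(x,y)^2$. Complete $\ell$ to a basis: $U=\ell(X,Y)$, together with an independent $W$. The polynomial becomes $U^2+\gamma(UW-WU)$, with $\gamma$ proportional to $b-c$.

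If $\gamma=0$, the equation is $U^2=0$. Then $U$ is nilpotent of square zero, and in this subcase the solutions need not commute, so the claim must be read as being about dimension.

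If $\gamma\neq0$, we have $[U,W]=-U^2/\gamma$. Taking traces of powers gives $\operatorname{tr}(U^{k+1})=\gamma\operatorname{tr}(U^{k-1}[U,W])=0$. So $U$ is nilpotent, and $U$ and $[U,W]$ commute. By Jacobson's lemma $U^2$ is then nilpotent, but we need $U^2=0$, which gives commuting. For this I would analyze the derivation $\operatorname{ad}_W$, which acts on $U$ via $[W,U]=U^2/\gamma$ and hence $[W,U^k]=kU^{k+1}/\gamma$. Iterating and comparing nilpotency degrees, using that $\operatorname{ad}_W$ cannot raise the nilpotent order indefinitely on a finite-dimensional space, should force $U^2=0$. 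This is the delicate part of (iii), and I would try to settle it first via the Jordan form of $U$.

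Finally, the dimension count: the solutions are pairs with $U^2=0$ and $W$ arbitrary among matrices commuting with $U$, and this locus has dimension $d^2$.
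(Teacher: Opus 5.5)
Your reduction for (i)--(ii) when $\alpha\beta\neq 0$ is the paper's own: take the two linear factors of $ax^2+(b+c)xy+dy^2$ as new variables so that the squares drop out. Your identification of $q=-\beta/\alpha$ with the reciprocal of the theorem's ratio correctly fills in a computation the paper skips.

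\textbf{Degenerate subcase $\alpha\beta=0$.} Here the count is wrong. The quantity $r(2d-r)+(d-r)d=d^2+r(d-r)$ is \emph{minimized} at $r=0,d$, not maximized. Its maximum is $d^2+\lfloor d^2/4\rfloor$. For $d=2$, pairs of rank-one matrices $u_1v_1^T,\,u_2v_2^T$ with $v_1^Tu_2=0$ already form a $5$-dimensional family. So for a reducible form such as $XY$, whose ratio is $0$ for one choice of $\sqrt{\Delta}$, statement (ii) is false. This subcase cannot be absorbed into (ii); it has to be excluded. The paper does exactly that, tacitly: it asserts that both coefficients $p,q$ are nonzero ``since the polynomial is irreducible'', which is your condition $\alpha\beta\neq0$.

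\textbf{Case (iii).} The paper offers no argument here, since its substitution needs two distinct roots. Your plan cannot be completed, because the pivotal claim $U^2=0$ is false for $d\ge 3$.
\begin{enumerate}[(a)]
\item Take $N=\sum_{i=1}^{d-1}E_{i,i+1}$ and $W_0=\sum_{i=1}^{d-1}iE_{i,i+1}$. One checks $[N,W_0]=N^2$. Hence $(U,W)=(N,-\gamma^{-1}W_0)$ solves $U^2+\gamma[U,W]=0$ with $[U,W]=-\gamma^{-1}N^2\neq0$. Concretely, $X=E_{12}+E_{23}$, $Y=-(E_{12}+2E_{23})$ is a non-commuting zero of $X^2+XY-YX$ in $\mathbb{M}_3^2$.
\item Your $\operatorname{ad}_W$ heuristic yields no contradiction. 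The identity $[W,U^k]=kU^{k+1}/\gamma$ is compatible with any nilpotency order.
\item What survives for $\gamma\neq0$ is the dimension. $U$ is nilpotent by your trace argument. Over each of the finitely many nilpotent orbits, the fibre $\{W:[U,W]=-U^2/\gamma\}$ is nonempty, by a block-diagonal version of the construction in (a). It is a translate of the centralizer $C(U)$, so each stratum has dimension $\dim\mathcal{O}_U+\dim C(U)=d^2$.
\item For $\gamma=0$, the locus $\{U^2=0\}\times\mathbb{M}_d$ has dimension $d^2+\lfloor d^2/2\rfloor$, so the dimension claim fails outright.
\item When the symmetric part vanishes ($a=d=0$, $c=-b\neq0$), there is no $\ell$ to complete to a basis. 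The polynomial is $b[X,Y]$, and its zero locus is the commuting variety, of dimension $d^2+d$.
\end{enumerate}
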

	\section{NC Manifolds}
	In this section, we give the formal definition of nc-manifolds.
	The idea of nc-functions is that they preserve the basic properties of free polynomials. For a free polynomial $p(x^1,\ldots,x^n)$ in $n$ variables, the properties are 
	\begin{enumerate}[(1)]
		\item (graded) For $X = (X^1,\ldots,X^n)\in \mathbb{M}_d^n $, $P(X) \in \mathbb{M}_d$.
		\item (preserve direct sum) For $X  = (X^1,\ldots,X^n)\in \mathbb{M}_{d_1}^n, Y = (Y^1,\ldots,Y^n)\in \mathbb{M}_{d_2}^n$, let $X \oplus Y = (X^1\oplus Y^1,\ldots,X^n\oplus Y^n)$, then $p(X \oplus Y) = p(X) \oplus p(Y)$.
		\item (preserve joint similarity) For $X = (X^1,\ldots,X^n)\in \mathbb{M}_d^n $ and $S \in GL(\mathbb{C},d)$, let $SXS^{-1} = (SX^1S^{-1},\ldots,SX^nS^{-1})$, then $p(SXS^{-1}) = Sp(X)S^{-1}$.
	\end{enumerate}
	\begin{definition}
		(NC sets) If $D \in \mathbb{M}^d$ we say that $D$ is an nc-set if $D$ is closed with respect to direct sums, that is:\newline
		For all $d_1,d_2 \in \mathbb{N}$, and all $X_1 \in D\cap \mathbb{M}_{d_1}^n$, $X_2 \in D\cap \mathbb{M}_{d_2}^n$, 
		\begin{equation}
			X_1 \oplus X_2 \in D\cap \mathbb{M}_{d_1+d_2}^n
		\end{equation}
	\end{definition}
	\begin{definition}
		(NC functions) An nc-function in n variables is a function $f$, whose domain is an nc-set $D\subset \mathbb{M}^n$ and codomain is $\mathbb{M}^1$, which satisfies the property of graded, direct sum preserving and joint similarities preserving. 
		\newline
		A holomorphic nc-function is defined as the locally bounded nc-functions and a k-tuple of holomorphic  nc-functions is a nc-mapping.
	\end{definition}
	\begin{definition}(NC manifolds)
		If $X$ is a set, then we say that $\alpha$ is a n-dimensional coordinate patch or chart on $X$ if $\alpha$ is a bijection from a set $U_\alpha \subset X$ to a set $D_\alpha \subset \mathbb{M}^n$. If $\alpha$ and $\beta$ are a pair of n-dimensional coordinate patches on $X$ with $U_\alpha \cap U_\beta \neq \emptyset$, then the transition map $T_{\alpha\beta}$ is defined as 
		\begin{align*}
			&T_{\alpha\beta}:\alpha(U_\alpha \cap U_\beta) \rightarrow \beta(U_\alpha \cap U_\beta)\\
			&T_{\alpha\beta}(x) = \beta \circ \alpha^{-1}(x)
		\end{align*}
		A set $\mathcal{A}$ is called a nc-atlas for $X$ if $\mathcal{A}$ is a collection of n-dimensional coordinate patches on $X$,
		\begin{equation*}
			\cup_{\alpha\in \mathcal{A}} U_\alpha = X
		\end{equation*}
		and, for all $\alpha,\beta \in \mathcal{A}$,
		\begin{enumerate}[(1)]
			\item $\alpha(U_\alpha \cap U_\beta)$ is a union of nc-sets, and
			\item for every nc-subset $W$ of $\alpha(U_\alpha \cap U_\beta)$, the restriction of $T_{\alpha\beta}$ to $W$ is an nc-mapping.
		\end{enumerate} 
	\end{definition}
	From the definition we could conclude that an n-dimensional nc-manifold is locally isomorphic to $\mathbb{M}^n$.
	\section{Notations and lemmas}
		To show the main result, several lemmas are required. The first one is the uniqueness of Sylvester equations, whose proof can be found in \cite{matrixAnalysis}
		\begin{lemma}\label{sylvester}
			Let $A,B \in \mathbb{M}_d$. The Sylvester equation $AX+XB=C$ for any $C\in \mathbb{M}_d$ has a unique solution $X \in \mathbb{M}_d$ if and only if $A$ and $-B$ don't share any eigenvalue.
		\end{lemma}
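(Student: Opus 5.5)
The plan is to reduce the statement to the injectivity of a single linear operator. Consider the map
\[
T:\mathbb{M}_d\to\mathbb{M}_d,\qquad T(X)=AX+XB .
\]
Since $T$ is linear on the finite-dimensional space $\mathbb{M}_d$, the following are equivalent: the equation $T(X)=C$ has a unique solution for every $C$; it has a unique solution for some $C$; $\ker T=0$. Indeed, two solutions of $T(X)=C$ differ by an element of $\ker T$, and injectivity implies surjectivity by rank--nullity. So it suffices to show that $\ker T=0$ if and only if $\sigma(A)\cap\sigma(-B)=\emptyset$.

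For the direction ``shared eigenvalue implies non-uniqueness'', construct an explicit nonzero element of $\ker T$. Suppose $\lambda$ is an eigenvalue of $A$ and $-\lambda$ is an eigenvalue of $B$. Choose a right eigenvector $u\neq 0$ with $Au=\lambda u$. Since $B$ and $B^{T}$ have the same eigenvalues, choose a left eigenvector $w\neq 0$ with $w^{T}B=-\lambda w^{T}$. Put $X=uw^{T}$, which is nonzero. Then
\[
AX+XB=\lambda uw^{T}-\lambda uw^{T}=0,
\]
so $\ker T\neq 0$.

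For the converse, assume $\sigma(A)\cap\sigma(-B)=\emptyset$ and let $AX+XB=0$, that is, $AX=X(-B)$.
\begin{enumerate}[(1)]
\item By induction, $A^kX=X(-B)^k$ for every $k\ge 0$, and hence $p(A)X=Xp(-B)$ for every polynomial $p$.
\item Take $p=p_A$, the characteristic polynomial of $A$. By Cayley--Hamilton, $p_A(A)=0$, so $0=X\,p_A(-B)$.
\item By the spectral mapping theorem, the eigenvalues of $p_A(-B)$ are $p_A(-\mu_j)=\prod_i(-\mu_j-\lambda_i)$, where the $\mu_j$ are the eigenvalues of $B$ and the $\lambda_i$ those of $A$. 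Each factor is nonzero by the hypothesis, so $p_A(-B)$ is invertible.
\item Multiplying $0=X\,p_A(-B)$ on the right by $p_A(-B)^{-1}$ gives $X=0$, so $T$ is injective.
\end{enumerate}

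There is an alternative route that I would keep only as a remark. Vectorize, writing $T$ as $I\otimes A+B^{T}\otimes I$. After Schur-triangularizing $A$ and $B^{T}$, this matrix is triangular with diagonal entries $\lambda_i+\mu_j$, so $\det T\neq0$ exactly when no $\lambda_i=-\mu_j$.

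The only step that needs care is invoking the spectral mapping theorem to conclude that $p_A(-B)$ is invertible. It follows from triangularizing $B$, so no real obstacle remains. The main thing to state precisely is the equivalence between uniqueness for one given $C$ and uniqueness for all $C$, which the first paragraph settles.
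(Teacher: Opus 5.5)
Your proof is correct and complete. The paper gives no argument of its own for this lemma; it only cites a standard matrix-analysis text. What you wrote is the standard textbook proof:
\begin{itemize}
\item reduce to $\ker T=0$;
\item when $\sigma(A)\cap\sigma(-B)\neq\emptyset$, exhibit the rank-one kernel element $uw^{T}$;
\item for the converse, use Cayley--Hamilton together with the polynomial spectral mapping theorem.
\end{itemize}
The Kronecker-sum computation is the usual alternative proof.

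Nothing in your argument uses that $X$ is square. Take $A\in\mathbb{M}_a$, $B\in\mathbb{M}_b$ and $X$ of size $a\times b$. Then $uw^{T}$ is $a\times b$ and $p_A(-B)$ is an invertible $b\times b$ matrix, so the same proof gives the rectangular version verbatim. That rectangular version is what the paper actually uses. In the block decomposition of $Q$ and of the commutant $C$, the lemma is applied to off-diagonal blocks of size $a\times b$ with $a\neq b$ in general, solving equations like $XJ_m^{\lambda}+J_n^{\mu}X=0$. The lemma as stated only covers $A,B,X\in\mathbb{M}_d$. Your kernel formulation also matches the paper's usage, since every equation it feeds into the lemma is homogeneous.
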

		We introduce several notations about Jordan canonical forms.
		\begin{definition}
			$\mathcal{J}$ is defined to be the space of all $d\times d$ Jordan canonical forms. We hence define $\mathcal{J}_m$ to be the m-th skeleton of $\mathcal{J}$, such that for all Jordan canonical forms in $\mathcal{J}_m$, $m$ is the smallest integer such that $\sigma(J) \in \cup_{i=1}^{m} \{\lambda_i,  -\lambda_i\} \cup \{0\}$.
		\end{definition}
		For example, when $d=2$,
		\begin{equation*}
			\mathcal{J} = \{\begin{pmatrix}
				a &0\\0&b
			\end{pmatrix} \text{ or } \begin{pmatrix}
				c &1\\ 0&c
			\end{pmatrix}| a,b,c \in \mathbb{C}\}
		\end{equation*}
		\begin{equation*}
			\mathcal{J}_0 = \{\begin{pmatrix}
				0&0\\0&0
			\end{pmatrix},\begin{pmatrix}
				0&1\\0&0
			\end{pmatrix}\}
		\end{equation*}
		\begin{align*}
			\mathcal{J}_1 = \{\begin{pmatrix}	a&0\\0&a\end{pmatrix}|a\in \mathbb{C}\setminus{0}\} \cup
			\{\begin{pmatrix}	a&0\\0&-a\end{pmatrix}|a\in \mathbb{C}\setminus{0}\} \cup \{\begin{pmatrix}	a&0\\0&0\end{pmatrix}|a\in \mathbb{C}\setminus{0}\}\\ \cup \{\begin{pmatrix}	0&0\\0&a\end{pmatrix}|a\in \mathbb{C}\setminus{0}\} \cup \{\begin{pmatrix}	a&1\\0&a\end{pmatrix}|a\in \mathbb{C}\setminus{0}\}
		\end{align*}
		and
		\begin{equation*}
			\mathcal{J}_2 = \mathcal{J}\setminus(\mathcal{J}_0\cup\mathcal{J}_1)
		\end{equation*}
		Notice that $\mathcal{J}_m$ consists of components and the intersection of the components are contained in lower skeletons, so the components are disjoint. 
		%To characterize these components, let $a^{(m)}$ be a map from a m-tuple to d-tuple, and let $b^{(m)}$ be a (d-1)-tuple of 0's and 1's, then $J(a^{(m)},b^{(m)})$ is the Jordan canonical form with diagonal  and superdiagonal being $b^{(m)}$.
		\newline
		We introduce this strange looking notation since the solution of $XY + qYX=0$ has different dimensions when the Jordan canonical form of $Y$ lies on $\mathcal{J}_m$. And we are going to show a subset of the solution of $XY+YX=0$ attached to one of the components in $\mathcal{J}_{\lfloor \frac{d}{2} \rfloor}$ has dimension $d^2+\lfloor \frac{d}{2} \rfloor$ and is the one with highest dimension.
		\newline
		Also
		 we need two inequalities regarding finite sequences.
		\begin{lemma}\label{inequality}
			For two finite sequence of numbers $m_1,\ldots,m_k$ and $n_1,\ldots,n_l$,
			\begin{equation}\label{ineq}
				\sum_{i,j=1}^{k} \min\{m_i,m_j\} + \sum_{i,j=1}^{l}\min\{n_i,n_j\} \geq 2\sum_{i=1}^{k}\sum_{j=1}^{l}\min\{m_i,n_j\}
			\end{equation}
			The equality holds only when $k=l$ and $m_i = n_i$ for all $i$.
		\end{lemma}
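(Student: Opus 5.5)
We have to prove $\sum_{i,j}\min(m_i,m_j)+\sum_{i,j}\min(n_i,n_j)\ge 2\sum_{i,j}\min(m_i,n_j)$, with equality only when the two sequences agree up to reordering. We assume the $m_i,n_j$ are nonnegative, which is the intended setting since they are Jordan block sizes. The plan is to write $\min$ as an integral of a product of indicator functions, which makes the left side minus the right side a perfect square.

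\textbf{Step 1: layer-cake identity.} For $a,b\ge 0$,
\begin{equation*}
\min\{a,b\}=\int_0^\infty \mathbf{1}_{[t<a]}\,\mathbf{1}_{[t<b]}\,dt .
\end{equation*}
For integers we may replace the integral by a sum over $t=0,1,2,\ldots$.

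\textbf{Step 2: define counting functions.} Set $M(t)=\#\{i: m_i>t\}$ and $N(t)=\#\{j: n_j>t\}$. Summing the identity of Step 1 over all index pairs gives
\begin{equation*}
\sum_{i,j}\min\{m_i,m_j\}=\int_0^\infty M(t)^2\,dt,\qquad \sum_{i,j}\min\{n_i,n_j\}=\int_0^\infty N(t)^2\,dt,
\end{equation*}
\begin{equation*}
\sum_{i,j}\min\{m_i,n_j\}=\int_0^\infty M(t)N(t)\,dt .
\end{equation*}

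\textbf{Step 3: conclude the inequality.} Subtracting, the left side of \eqref{ineq} minus the right side equals
\begin{equation*}
\int_0^\infty \bigl(M(t)-N(t)\bigr)^2\,dt\ \ge 0 .
\end{equation*}

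\textbf{Step 4: equality case.} Equality forces $M(t)=N(t)$ for almost every $t$. Both functions are right-continuous step functions, so in fact $M=N$ everywhere. A nonnegative multiset is determined by its counting function, since the number of entries equal to $s$ is $M(s^-)-M(s)$. Hence the two multisets coincide: after reordering, $k=l$ and $m_i=n_i$ for all $i$.

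The equality case holds only up to reordering, or if both sequences are assumed sorted. For positive entries, $k=M(0)=N(0)=l$. Zero entries contribute nothing to any of the sums, so the lemma's count condition $k=l$ should be read modulo zero terms; for Jordan block sizes this issue does not arise.

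The main obstacle is conceptual rather than computational: recognizing that $\min\{a,b\}$ is a positive-definite kernel, via the layer-cake form of Step 1. Once that is in place, everything else is bookkeeping. If the inequality were needed for real numbers of arbitrary sign, the same argument works after integrating over $t\in\mathbb{R}$ with $M(t)-N(t)$, but the sequences arising in the paper are nonnegative, so this is not needed.
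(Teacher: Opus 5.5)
Your proof is correct, and it takes a genuinely different route from the paper's. The paper pads the shorter sequence with zeros (which, like your argument, tacitly requires nonnegative entries), sorts both sequences increasingly, and inducts on the common length. Sorting gives $\sum_{i,j\le k+1}\min\{m_i,m_j\}=\sum_{i,j\le k}\min\{m_i,m_j\}+2\sum_{i\le k}m_i+m_{k+1}$. The inductive step then reduces to three termwise bounds: $\min\{m_i,n_{k+1}\}\le m_i$, $\min\{m_{k+1},n_i\}\le n_i$ and $2\min\{m_{k+1},n_{k+1}\}\le m_{k+1}+n_{k+1}$. The paper prints these with strict signs and the last one reversed. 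This route is completely elementary, but the paper never argues the equality clause. One would have to add that equality forces equality in every termwise bound and in the inductive hypothesis, which yields $m_i=n_i$ only for the sorted, zero-padded sequences.

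Your layer-cake identity instead produces the exact defect $\int_0^\infty (M(t)-N(t))^2\,dt$. The correct equality case (equal multisets, up to order and up to zero entries) is then immediate. It also gives Lemma~\ref{geninequality} together with its equality case in one line: with $M_r$ the counting function of the $r$-th sequence and indices mod $l$, use $\sum_r (M_r-M_{r+1})^2\ge 0$. That equality case is exactly the characterization the paper relies on in Case 1'.

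One caution about your final remark: the inequality does \emph{not} extend to entries of arbitrary sign. Integrating over all of $\mathbb{R}$ diverges. Renormalizing with a constant $c$ below every entry adds $c(k-l)^2$ to the defect, which is negative when $c<0$ and $k\neq l$. For instance, $m=(-1)$ with $n$ empty gives $-1\ge 0$. For signed entries, the argument and the inequality hold only when $k=l$.
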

        
		\begin{proof}
			With out changing both sides, add $0$ to the shorter sequence to make $k=l$, and rearrange both sequences in increasing order. Then we prove the result by induction on the length of the sequences.
			\newline
			When both sequences have length equal to 1, then $m_1 + n_1 \geq 2\min\{m_1,n_1\}$.
			\newline
			Suppose the inequality holds for two sequences with length $k$, then consider two sequences with length $k+1$.
			\begin{align*}
				\sum_{i,j=1}^{k+1} \min\{m_i,m_j\} +& \sum_{i,j=1}^{k+1}\min\{n_i,n_j\}\\
				=&(\sum_{i,j=1}^{k} \min\{m_i,m_j\} + 2\sum_{i=1}^{k}m_i + m_{k+1}) \\+& (\sum_{i,j=1}^{l}\min\{n_i,n_j\} + 2\sum_{i=1}^{k}n_i + n_{k+1})\\
			\end{align*}
			While
			\begin{align*}
				\sum_{i,j=1}^{k+1}\min\{m_i,n_j\}
				=&\sum_{i,j=1}^{k}\min\{m_i,n_j\} + \sum_{i=1}^{k}\min\{m_i,n_{k+1}\} \\+& \sum_{i=1}^{k}\min\{m_{k+1},n_i\} + \min\{m_{k+1},n_{k+1}\}
			\end{align*}
			Since
			\begin{align*}
				\sum_{i=1}^{k}\min\{m_i,n_{k+1}\} &< \sum_{i=1}^{k}m_i \\
				\sum_{i=1}^{k}\min\{m_{k+1},n_i\} &< \sum_{i=1}^{k}n_i \\
				m_{k+1} + n_{k+1} &< 2\min\{m_{k+1},n_{k+1}\}
			\end{align*}
			the induction is completed.
		\end{proof}
		Applying Lemma (\ref{inequality}) for $l$ times on pairs of sequences 
		\begin{equation*}
			\{\{m_i^1\}_i,\{m_i^2\}_i\}, \ldots, \{\{m_i^{l-1}\}_i,\{m_i^l\}_i\}\text{, and } \{\{m_i^l\}_i,\{m_i^1\}_i\}
		\end{equation*}
		then adding them together, we get the following general lemma.
		\begin{lemma}\label{geninequality}
			For finitely many finite sequence of numbers $m_{1}^1,\ldots m_{k_1}^1$, $\ldots$, $m_{1}^l,\ldots m_{k_l}^l$,
			\begin{align}
				\begin{split}
					&\sum_{i,j = 1}^{k_1} \min\{m_i^1,m_j^1\} + \cdots + \sum_{i,j = 1}^{k_l} \min\{m_i^l,m_j^l\}  \\
					\geq&\sum_{i=1}^{k_1}\sum_{j=1}^{k_2} \min\{m_i^1,m_j^2\} + \cdots +\sum_{i=1}^{k_{l-1}}\sum_{j=1}^{k_l} \min\{m_i^{l-1},m_j^l\} + \sum_{i=1}^{k_l}\sum_{j=1}^{k_1} \min\{m_i^l,m_j^1\}
				\end{split}
			\end{align}
		\end{lemma}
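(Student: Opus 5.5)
The plan is to derive Lemma \ref{geninequality} directly from Lemma \ref{inequality}, applied once to each cyclically adjacent pair of sequences. For brevity write
\[
S(a,b) = \sum_{i=1}^{k_a}\sum_{j=1}^{k_b}\min\{m_i^a,m_j^b\}
\]
for $a,b\in\{1,\ldots,l\}$. In this notation the left-hand side of the claimed inequality is $\sum_{a=1}^{l} S(a,a)$. The right-hand side is the cyclic sum $S(1,2)+S(2,3)+\cdots+S(l-1,l)+S(l,1)$.

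First, for each $a=1,\ldots,l$, I apply Lemma \ref{inequality} to the pair of sequences $\{m_i^a\}_i$ and $\{m_i^{a+1}\}_i$, with indices taken mod $l$ so that the last pair is $(l,1)$. This gives
\[
S(a,a)+S(a+1,a+1)\geq 2S(a,a+1).
\]

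Second, I add these $l$ inequalities. Each diagonal term $S(a,a)$ occurs in exactly two of them: as the first term of pair $(a,a+1)$ and as the second term of pair $(a-1,a)$. So the left sides sum to $2\sum_a S(a,a)$. The right sides sum to $2\sum_a S(a,a+1)$, which is twice the cyclic sum. Dividing by $2$ gives the statement.

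Third, I need to check the degenerate cases of the cycle. When $l=2$ the cycle consists of the pairs $(1,2)$ and $(2,1)$. Since $\min$ is symmetric, $S(2,1)=S(1,2)$, and the argument is just Lemma \ref{inequality} applied twice, which is consistent. When $l=1$ the statement reads $S(1,1)\geq S(1,1)$, which is trivial.

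There is no real obstacle beyond this bookkeeping, namely checking that each diagonal term is counted exactly twice. The only subtle point is that the proof relies on Lemma \ref{inequality}, whose sketched inductive step has its inequalities written in the wrong direction. If I wanted to be self-contained, I would instead justify Lemma \ref{inequality} via the layer-cake identity
\[
\min\{x,y\}=\int_0^\infty \mathbf{1}_{x>t}\,\mathbf{1}_{y>t}\,dt,
\]
valid for nonnegative numbers, which is the relevant case of Jordan block sizes. Setting $M(t)=\#\{i: m_i>t\}$ and $N(t)=\#\{j: n_j>t\}$, the difference of the two sides of \eqref{ineq} becomes
\[
\int_0^\infty \bigl(M(t)^2+N(t)^2-2M(t)N(t)\bigr)\,dt=\int_0^\infty \bigl(M(t)-N(t)\bigr)^2\,dt\geq 0.
\]
This also yields the equality case.
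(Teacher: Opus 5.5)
Your proposal is correct and is exactly the paper's argument: apply Lemma~\ref{inequality} to the $l$ cyclically adjacent pairs $(1,2),\ldots,(l-1,l),(l,1)$ and add, so that each diagonal sum appears twice and halving gives the claim. Your side remark is also well taken. The paper's inductive step for Lemma~\ref{inequality} misstates its inequalities (the first two should be non-strict and the third is reversed). Your identity $\min\{x,y\}=\int_0^\infty \mathbf{1}_{x>t}\mathbf{1}_{y>t}\,dt$ gives a clean replacement, valid for the nonnegative entries (Jordan block sizes) that actually occur and that the statement implicitly requires.
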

	\section{Proofs of the main theorems}

		\begin{proof}[Proof of Theorem 1]    
			$ $\newline
			\textbf{I. A special case.}
			Assuming first that we are in the special case when $q = -1$. Since we are trying to find the nontrivial solutions other than $(0,0)$, we assume the criteria in Lemma \ref{sylvester} fails, that is, $Y$ has at least one pair of eigenvalues being in the form $\{\lambda, -\lambda\}$ or one eigenvalue being $0$. Write $Y$ in its Jordan decomposition $Y=P^{-1}JP$, where $J$ is the Jordan canonical form of $Y$. Then
			\begin{align}\label{diag}
				0=XY+YX&=XP^{-1}JP+P^{-1}JPX\nonumber \\&=P^{-1}(PXP^{-1}J +JPXP^{-1})P
			\end{align}
			Let $\eta(J)$ be the smallest integer such that $\sigma(Y) = \sigma(J) \in \cup_{i=1}^{\eta} \{\lambda_i,  -\lambda_i\} \cup \{0\}$. Then $J$ is in form of 
			\begin{equation*}
				J = \bigoplus_{i=1}^{\eta} \bigoplus_{k=1}^{p_i} J_{d_{i,k}}^{(\lambda_i)} \oplus \bigoplus_{i=1}^{\eta}\bigoplus_{l=1}^{q_i} J_{d_{i,l}'}^{(-\lambda_i)} \oplus \bigoplus_{j=1}^{r} J_{d_{j}}^{(0)},
			\end{equation*}
			where $J_l^\lambda$ is a Jordan block with size $l \times l$ and $\lambda$ on its diagonal. That is
			\begin{equation*}
				J 
				=\begin{pmatrix}
					\begin{matrix}
						\lambda_1 & \alpha_1 & 0\\
						0 & \lambda_1 & \ddots\\
						0 & 0 & \lambda_1
					\end{matrix}
					&\mathbf{0}&\mathbf{0}&\mathbf{0} \\
					\mathbf{0}& 
					\begin{matrix}
						-\lambda_1 & \beta_1 & 0\\
						0 & -\lambda_1 & \ddots\\
						0 & 0 & -\lambda_1
					\end{matrix}
					&\mathbf{0}&\mathbf{0}\\
					\mathbf{0}&\mathbf{0}& \ddots &\mathbf{0}\\
					\mathbf{0}&\mathbf{0}&\mathbf{0}&
					\begin{matrix}
						0 & \gamma & 0\\
						0 & 0 & \ddots\\
						0 & 0 & 0
					\end{matrix}
				\end{pmatrix}_{n \times n}
			\end{equation*}
			%where $\mathbf{a}(\lambda)$ is the diagonal of $J$ where $\mathbf{a}$ maps an m-tuple $\lambda$ to a d-tuple, and $\mathbf{b}$ is the superdiagonal of $J$ that is an (n-1)-tuple consist of 0's and 1's. 
			Let $\mathcal{J}_m$ be the set of all $J$ with $\eta(J) =m$ for $m$ being a fixed integer, then $\mathcal{J}_m$ forms a m-dimensional complex manifold, hence all possible $d \times d$ Jordan canonical form lies in $\mathcal{J} = \cup_{i=0}^d \mathcal{J}_m$. Then all the matrices $Y$ with Jordan canonical form on $\mathcal{J}$ is $\mathcal{Y} \cong \mathcal{J} \times SL_d(\mathbb{C})/\sim$ where the relation $\sim$ is that $(J_1,P_1) \sim (J_2,P_2)$ when $P_1JP_1^{-1} = P_2P(J)P_2^{-1}$, where $P(J)$ is a permutation of the Jordan blocks in $J$. Considering (\ref{diag}), we claim that there is a bijective relationship between
			\begin{equation}
				\{(X,Y)|XY+YX=0\} \text{ and } \{(Q,J,P)| QJ+JQ = 0, J\in \mathcal{J}, P \in SL_d(\mathbb{C})\}/\sim
			\end{equation}
			Here the relation is $(Q_1, J, P_1) \sim (P_2P_1^{-1}Q_1P_1P_2^{-1
			}, J, P_2)$. In terms of dimension counting, the dimension for $P$ is $n^2$, therefore it remains to consider the dimension of $J$ and $Q$.

			Separate $J$ by sizes of Jordan block on diagonal, such that the $i,j$-th block of $J$ has size $a \times b$ given the i-th Jordan block is $a \times a$ and j-th Jordan block is $b \times b$. For $Q$ that satisfies $QJ+JQ=0$, separate $Q$ in the same way as $J$ does, then the i,j-th block of $Q$, $Q_{i,j}$, is the solution of $XJ_m^{\lambda} + J_n^{\mu}X=0$, where $J_m^{\lambda}$ is the j-th diagonal block of $J$ and $J_n^{\mu}$ is the i-th diagonal block of $J$. Again by Lemma \ref{sylvester}, functions in form of $XJ_m^{\lambda} + J_n^{\mu}X=0$ has a unique solution $X=0$ when $\lambda+\mu \neq 0$. 
			
			For the equivalence relation, we need to quotient out the commutant for each fixed $J$. Let $C \in \mathbb{M}_d$ such that $CJ-JC=0$. Again cut $C$ into blocks by the sizes of Jordan blocks of $J$, then $C_{i,j}J_m^{\lambda} - J_n^{\mu}C_{i,j}=0$ has non zero solution only when $\lambda = \mu$
			
			Therefore if we cut $J$ ,$Q$ and $C$ into $(m+1) \times (m+1)$ blocks of the same sizes with the i-th diagonal block of $J$ only have $\lambda_i, -\lambda_i$ on diagonal, then $Q$ and $C$ is nonzero only on the diagonal blocks. So when we are considering the dimension of $\{Q| QJ+JQ = 0\}$ and $\{C|CJ-JC=0\}$ for some fixed $ J\in \mathcal{J}$, it suffice to consider $J,Q,C$ block by block along diagonal with $J$'s of the form $\bigoplus_{k=1}^p J_{d_k}^{(\lambda)} \oplus \bigoplus_{l=1}^{q}J_{d_{l}'}^{(-\lambda)}$ or $\bigoplus_{i=1}^{m}J_{d_i}^{(0)}$ and then add up the dimensions. 
			\newline
			\textbf{Case 1} Consider $J$'s that lie in the submanifold of $\mathcal{J}$ given by $\mathcal{J}^{(\lambda)} = \{\bigoplus_{k=1}^p J_{d_k}^{(\lambda)} \oplus \bigoplus_{l=1}^{q}J_{d_{l}'}^{(-\lambda)} | \lambda \neq 0\} \subset \mathbb{M}_d$. Then $\mathcal{J}^{(\lambda)}$ is a 1 complex dimensional manifold that only depends on $\lambda \in \mathbb{C}$, that is
			\begin{equation} \label{dimJ}
				\dim(\mathcal{J}^{(\lambda)}) = 1
			\end{equation}
			For the commutant $\{C|CJ=JC\}$ of fixed $J \in \mathcal{J}^{(\lambda)}$, cut $C$ into blocks by the sizes of Jordan blocks of $J$, then since $C_{i,j}J_m^{\lambda} - J_n^{\mu}C_{i,j}=0$ has non zero solution only when $\lambda = \mu$, $C$ can be nonzero only in i,j-th blocks for $1\leq i,j \leq p$ or $p+1\leq i,j \leq d$. Since the solution of $X$ such that $XJ_m^\lambda-j_n^\lambda X=0$ is in form of 
			\begin{equation}
				\begin{matrix}
					a &b &c &\cdots \\
					0 &a &b &\cdots \\
					0 &0 &a &\cdots \\
					\vdots &\vdots &\vdots &\ddots
				\end{matrix}
			\end{equation}
			the solution of $X$ has dimension $\min \{m,n\}$. Therefore for fixed $J$
			\begin{equation}\label{dimC}
				\dim(\{C|CJ=JC\}) = \sum_{k_1=1}^{p}\sum_{k_2=1}^{p} \min\{d_{k_1},d_{k_2}\} + \sum_{l_1=1}^{q}\sum_{l_2=1}^{q} \min\{d_{l_1}',d_{l_2}'\}
			\end{equation}

			It remains to find $\dim(\{Q|QJ+JQ=0\})$. Again cut $Q$ into blocks by the sizes of Jordan blocks of $J$, then since $C_{i,j}J_m^{\lambda} + J_n^{\mu}C_{i,j}=0$ has non zero solution only when $\lambda = -\mu$, $Q$ can be nonzero only in i,j-th blocks for $1\leq i \leq p, p+1\leq j \leq d$ or $p+1\leq i \leq d, 1\leq j \leq p$. Since the solution of $X$ such that $XJ_m^\lambda+j_n^{-\lambda} X=0$ is in form of 
			\begin{equation}
				\begin{matrix}
					a &b &c &\cdots \\
					0 &-a &-b &\cdots \\
					0 &0 &a &\cdots \\
					\vdots &\vdots &\vdots &\ddots
				\end{matrix}
			\end{equation}
			the solution of $X$ has dimension $\min \{m,n\}$. Therefore for fixed $J$
					
			\begin{equation}\label{dimQ}
				\dim(\{Q|QJ+JQ=0, J \in \mathcal{J}_\lambda\}) = 2\sum_{k=1}^{p}\sum_{l=1}^{q}\min\{d_k,d_l'\}
			\end{equation}
			Combining (\ref{dimJ}),(\ref{dimC}),(\ref{dimQ}), there is 
			\begin{align}
				\begin{split}
					\dim(\{Q|QJ+JQ=&0, J \in \mathcal{J}_\lambda\}) +
					\dim(\mathcal{J}_\lambda) - \dim(\{C|CJ=JC, J \in \mathcal{J}_\lambda\})\\
					=&2\sum_{k=1}^{p}\sum_{l=1}^{q}\min\{d_k,d_l'\} +1 \\
					-& \sum_{k_1=1}^{p}\sum_{k_2=1}^{p} \min\{d_{k_1},d_{k_2}\} - \sum_{l_1=1}^{q}\sum_{l_2=1}^{q} \min\{d_{l_1'},d_{l_2'}\} \\
					\leq& 1
				\end{split}, 
			\end{align}
			where the last inequality is given by Lemma \ref{inequality}.
			\newline
			\textbf{Case 2} suppose $ \mathcal{J}_0= \{\bigoplus_{i=1}^{m}J_{d_i}^{(0)}\} \subset \mathbb{M}_d$. Then
			\begin{align}
				&\dim(\mathcal{J}_0) = 0\\
				&\dim(\{C|CJ=JC, J \in \mathcal{J}_0\}) = \sum_{i,j=1}^{m} \min\{d_i,d_j\}\\
				&\dim(\{Q|QJ+JQ=0, J \in \mathcal{J}_0\}) = \sum_{i,j=1}^{m} \min\{d_i,d_j\}
			\end{align}
			Therefore $\dim(\{Q|QJ+JQ=0, J \in \mathcal{J}_0\}) + \dim(\mathcal{J}_0) - \dim(\{C|CJ=JC, J \in \mathcal{J}_0\}) =0$.
			\newline
			From the two cases above, each pair of $\{\lambda, -\lambda\}\in \sigma(Y)$ gives at most 1 additional dimension to the overall dimension of zero locus $\{(X,Y)|XY+YX=0\}$. The component of 
			\begin{equation*}
				\{(X,Y)|XY+YX=0 | \text{ Jordan canonical form of Y lies in }\mathcal{J}_m\}
			\end{equation*}
			with largest dimension is $\{Q|QJ+JQ=0, J \in \mathcal{J}_*\} \times \mathcal{J}_* \times GL_d(\mathbb{C}) / \{C|CJ=JC, J \in \mathcal{J}_*\} $ where $\mathcal{J}_*$ is $\mathcal{J}_* = \{\bigoplus_{i=1}^{m}\bigoplus_{k=1}^{p_i}(J_{d_{i,k}}^{(\lambda_i)} \oplus J_{d_{i,k}}^{(-\lambda_i)})\oplus J_{d_0}^{(0)}\in \mathbb{M}_d\}$, and the corresponding $D = n^2 + m \leq n^2 + \lfloor \frac{n}{2} \rfloor$, where equality is realized when all Jordan blocks are $1 \times 1$, hence $Y$ is diagonalizable. In this case the component with largest dimension is in form of
			\begin{align*}
				PYP^{-1} = J = diag(\lambda_1,-\lambda_1,\lambda_2,-\lambda_1,\ldots,\lambda_{d/2},-\lambda_{d/2})\\
				PXP^{-1} = (\begin{pmatrix} 0& z_1\\w_1&0\end{pmatrix})\oplus(\begin{pmatrix} 0& z_2\\w_2&0\end{pmatrix})\oplus \cdots \oplus (\begin{pmatrix} 0& z_{d/2}\\w_{d/2}&0\end{pmatrix})
			\end{align*}
			\newline
			\textbf{II The general case} Now consider the general case when $q^n=1$. By a similar argument to \textbf{I}, in order for $XY+qYX=0$ to have a non-trivial solution, the eigenvalues of $Y$ must either have a pairs of the form $\{\lambda,q\lambda\}$ or a 0 eigenvalue. Now let $m$ be the smallest integer such that $\sigma(J) \in \cup_{i=1}^m\{\lambda_i,q\lambda_i,\ldots,q^{n-1}\lambda_i\}$, then
			\begin{equation*}
				J = \bigoplus_{i=1}^{m}\Bigg( \bigoplus_{k=1}^{p_i^{(0)}} J_{d_{i,k}^{(0)}}^{(\lambda_i)} \oplus \bigoplus_{l=1}^{p_i^{(1)}} J_{d_{i,l}^{(1)}}^{(q\lambda_i)} \oplus \cdots \oplus
				\bigoplus_{l=1}^{p_i^{(n-1)}} J_{d_{i,l}^{(n-1)}}^{(q^{(n-1)}\lambda_i)}\Bigg)
				\oplus \bigoplus_{j=1}^{r} J_{d_{j}}^{(0)}.
			\end{equation*}Note that some of the $d_{\alpha,\beta}^{(\gamma)}$ can be 0. Parallel with the same reasoning in the special case, we could consider $J$ block by block, since most of the entries of $Q$ and commutant of $J$ is zero off diagonal.
			\newline
			\textbf{Case 1'} Consider the submanifold of $\mathcal{J}_\lambda$ that $\mathcal{J}_\lambda = \{\bigoplus_{k=1}^{p^{(0)}}J_{d_k^{(0)}}^{(\lambda)} \oplus \cdots \oplus \bigoplus_{k=1}^{p^{(n-1)}}J_{d_k^{(n-1)}}^{(q^{n-1}\lambda)}\} \subset \mathbb{M}_d$. Then for fixed $J \in \mathcal{J}_\lambda$,
			\begin{align}
				&\dim(\mathcal{J}_\lambda) = 1 \\
				&\dim(\{C|CJ=JC\}) \nonumber \\ 
				&\qquad= \sum_{k,l=1}^{p^{(0)}} \min\{d_{k}^{(0)},d_{l}^{(0)}\} + \cdots +
				\sum_{k,l=1}^{p^{(n-1)}} \min\{d_{k}^{(n-1)},d_{l}^{(n-1)}\} \\
				&\dim(\{Q|QJ-qJQ=0\}) \nonumber\\
				&\qquad = \sum_{k=1}^{p^{(0)}}\sum_{k=1}^{p^{(1)}}\min\{d_{k}^{(0)},d_{l}^{(1)}\}+\cdots
				+\sum_{k=1}^{p^{(n-2)}}\sum_{k=1}^{p^{(n-1)}}\min\{d_{k}^{(n-2)},d_{l}^{(n-1)}\} \nonumber\\
				&\qquad +\sum_{k=1}^{p^{(n-1)}}\sum_{k=1}^{p^{(0)}}\min\{d_{k}^{(n-1)},d_{l}^{(0)}\}
			\end{align}
			Therefore
			\begin{align}\label{general}
				\begin{split}
					&\dim(\{Q|QJ-qJQ=0, J\in \mathcal{J}_\lambda\}) + \dim(\mathcal{J}_\lambda) - \dim(\{C|CJ=JC, J\in \mathcal{J}_\lambda\})\\
					&\qquad= \sum_{k=1}^{p^{(0)}}\sum_{k=1}^{p^{(1)}}\min\{d_{k}^{(0)},d_{l}^{(1)}\}+\cdots
					+\sum_{k=1}^{p^{(n-2)}}\sum_{k=1}^{p^{(n-1)}}\min\{d_{k}^{(n-2)},d_{l}^{(n-1)}\}\\
					&\qquad+\sum_{k=1}^{p^{(n-1)}}\sum_{k=1}^{p^{(0)}}\min\{d_{k}^{(n-1)},d_{l}^{(0)}\} + 1\\
					&\qquad- (\sum_{k,l=1}^{p^{(0)}} \min\{d_{k}^{(0)},d_{l}^{(0)}\} + \cdots +
					\sum_{k,l=1}^{p^{(n-1)}} \min\{d_{k}^{(n-1)},d_{k}^{(n-1)}\})\\
					&\qquad\leq 1,
				\end{split}
			\end{align}
			by Lemma (\ref{geninequality}), equality holds when $p^{(0)}=\cdots=p^{(n-1)}>0$ and all $d_\alpha^{(\beta)}$'s are the same for all $\beta$'s and fixed $\alpha$.
			\newline
			\textbf{Case 2'} Let $\mathcal{J}_0 = \{\bigoplus_{i=1}^{m}J_{d_i}^{(0)}\}\subset \mathbb{M}_d$. Then $\dim(\{Q|QJ-qJQ=0, J \in \mathcal{J}_0\}) + \dim(\mathcal{J}_0) - \dim(\{C|CJ=JC, J\in \mathcal{J}_0\}) =0$.
			\newline
			Thus, the component of $\{(X,Y)|XY+YX=0\}$ with largest dimension is $\{Q|QJ+JQ=0, J \in \mathcal{J}_*\} \times \mathcal{J}_* \times GL_d(\mathbb{C}) / \{C|CJ=JC, J \in \mathcal{J}_*\} $ where 
			\begin{equation}
				\mathcal{J}_* = \{\bigoplus_{i=1}^{m}\bigoplus_{k=1}^{p}(J_{d_i,k}^{(\lambda_i)}\oplus\cdots\oplus J_{d_i,k}^{(q^{(n-1)}\lambda_i)})\oplus J_{d_0}^{(0)} \in \mathbb{M}_d\}, 
			\end{equation}and the corresponding $D = d^2 + m \leq d^2 + \lfloor \frac{d}{n}\rfloor$ with equality holds when, in addition, all Jordan blocks are $1 \times 1$. In this case the component with largest dimension is in form of
			\begin{equation*}
			PYP^{-1} = J = 			diag(\lambda_1,q\lambda_1, \ldots q^{n-1}\lambda_1,\ldots,\lambda_{d/n},q\lambda_{d/n},\ldots,q^{n-1}\lambda_{d/n})
			\end{equation*}
			\begin{equation*}
			PXP^{-1} = \begin{pmatrix} 0& z_1^1&&\\&0&z_1^2&\\&& \ddots&\\z_1^{n}&&&0 \end{pmatrix}\oplus\begin{pmatrix} 0& z_2^1&&\\&0&z_2^2&\\&& \ddots&\\z_2^{n}&&&0\end{pmatrix}\oplus \cdots \oplus \begin{pmatrix} 0& z_{d/n}^1&&\\&0&z_{d/n}^2&\\&& \ddots&\\z_{d/n}^{n}&&&0\end{pmatrix}
			\end{equation*}
		
		\end{proof}
		Following the proof above, we can also solve the case when $q$ is not a root of unity.
		\begin{proof}[Proof of the Theorem 2]
			When $q$ is not a root of unity, there isn't term $\sum_{k=1}^{p^{(n-1)}}\sum_{k=1}^{p^{(0)}}\min\{d_{k}^{(n-1)},d_{l}^{(0)}\}$ in (\ref{general}), which makes it strictly less than 1, therefore the dimension of $Z(\{(X,Y)|XY-qYX=0\})$ is at most $d^2$.
		\end{proof}
	
		Now consider the commutative case.
		\begin{theorem}[commutative case]
			The commutative zero locus of $XY-qYX$ in $\mathbb{M}_d^2$ has dimension $d^2$.
		\end{theorem}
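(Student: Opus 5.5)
The plan is to reduce the commutative zero locus to a bilinear condition that can be counted rank by rank, rather than through Jordan forms. By the \emph{commutative} zero locus of $XY-qYX$ I mean the set of pairs $(X,Y)\in\mathbb{M}_d^2$ with $XY=YX$ and $XY-qYX=0$. I assume $q\neq 1$, which is the situation of Lemmas \ref{q ru} and \ref{q nru}. For $q=1$ this set is the full commuting variety, which has dimension $d^2+d$, so that case has to be excluded or treated separately. For $q\neq 1$, subtracting the two equations gives $(1-q)YX=0$. Hence the commutative zero locus is exactly
\begin{equation*}
W=\{(X,Y)\in\mathbb{M}_d^2 \mid XY=0,\ YX=0\},
\end{equation*}
independently of $q$.

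For the upper bound I would stratify $W$ by $r=\operatorname{rank} Y$, with $0\le r\le d$. Fix $Y$ of rank $r$. The condition $XY=0$ says that $X$ vanishes on $\operatorname{ran} Y$. The condition $YX=0$ says that $\operatorname{ran} X\subseteq \ker Y$. So the fibre $\{X \mid (X,Y)\in W\}$ is the linear space $\operatorname{Hom}(\mathbb{C}^d/\operatorname{ran}Y,\ \ker Y)$, of dimension $(d-r)^2$. The set of rank-$r$ matrices in $\mathbb{M}_d$ is a smooth quasi-affine variety of dimension $d^2-(d-r)^2$. It fibres over pairs (column space, row space) in a product of two Grassmannians, with fibre $GL_r$: $2r(d-r)+r^2=d^2-(d-r)^2$. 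Hence the stratum $W_r$ is a vector bundle of total dimension
\begin{equation*}
\bigl(d^2-(d-r)^2\bigr)+(d-r)^2=d^2 .
\end{equation*}
Since $W$ is the finite union of the $W_r$, we get $\dim W=\max_r \dim W_r=d^2$.

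This also gives the lower bound, since each stratum already has dimension exactly $d^2$. A concrete witness is the family $Y=P^{-1}(0_k\oplus B)P$, $X=P^{-1}(A\oplus 0_{d-k})P$ with $B$ invertible. Equivalently, the closure of $W_r$ is an irreducible component of dimension $d^2$. Thus $\dim W=d^2$, matching the commutative statements in Lemmas \ref{q ru} and \ref{q nru} and in Theorem \ref{main}.

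The only step needing care is the dimension of the rank-$r$ locus and the claim that each $W_r$ is a vector bundle over it, i.e.\ that the fibre dimension is constant on the stratum. This is standard and follows from the Grassmannian description above. The main conceptual obstacle is really the interpretation of ``commutative zero locus'' and the exclusion of $q=1$. I would state that exclusion explicitly at the start of the proof.
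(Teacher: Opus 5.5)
Your proof is correct and takes essentially the paper's route. You reduce to $\{XY=YX=0\}$, stratify by rank, note that over a rank-$r$ matrix the fibre has dimension $(d-r)^2$, and add the dimension $2rd-r^2$ of the rank-$r$ locus to get $d^2$. The differences from the paper are cosmetic: you stratify by $\mathrm{rank}\,Y$ instead of $\mathrm{rank}\,X$, and you identify the fibre as $\mathrm{Hom}(\mathbb{C}^d/\mathrm{ran}\,Y,\ker Y)$ rather than via Jordan blocks. Your explicit exclusion of $q=1$ is a genuine improvement, since the paper's reduction to $XY=YX=0$ tacitly requires $q\neq 1$.
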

		\begin{proof}
			Since $X,Y$ are commuting, $\{(X,Y)|XY-qYX = 0, XY-YX=0\} = \{(X,Y)|XY=YX=0\}$.
			Again consider the Jordan canonical form of $X$. Suppose $X = P^{-1}JP$ then $J(PYP^{-1}) = (PYP^{-1})J=0$. Since Jordan block with non-zero diagonal has full rank, the corresponding row and column of $Q := PYP^{-1}$ have to be zero at every entry. For Jordan block $J_d^{(0)}$, $\{Q|QJ_d^{(0)} = J_d^{(0)}Q=0\}$ is the $d \times d$ matrix with only $1,d$ entry can be non-zero.
			Therefore for $X$ with rank $r$, $\dim(\{Y| XY=YX=0\})$ is $(d-r)^2$. In addition that the dimension of matrices with rank $r$ is $rd + r(d-r) = 2rd-r^2$, the total dimension of the commutative zero locus is $2rd-r^2 + (d-r)^2 = d^2$.
		\end{proof}
	
		By changing of variables, we could extend our results to homogeneous quadratic equations for the main theorem.
\iffalse
		\begin{cor}
			Consider the second order homogeneous nc polynomial $aX^2+bXY+cYX+dY^2$ in $\mathbb{M}_d^2$. Let $\Delta = (b+c)^2-4ad$
			\begin{enumerate}[(i)]
				\item If $\Delta \neq 0$ and $n$ is the smallest integer such that $(-\frac{\sqrt{\Delta}+b-c}{\sqrt{\Delta}-b+c})^n=1$, the non-commutative zero locus has dimension $d^2 + \lfloor \frac{d}{n}\rfloor$, and the commutative zero locus has dimension $d^2$;
				\item If $\Delta \neq 0$ and $-\frac{\sqrt{\Delta}+b-c}{\sqrt{\Delta}-b+c}$ is not a root of unity, the non-commutative zero locus has dimension $d^2$, and so does the commutative zero locus;
				\item If $\Delta = 0$, all solutions commute and the dimension is $d^2$
			\end{enumerate}
		\end{cor}
\fi
		\begin{proof} [Proof of \ref{main}]
			Substituting $X,Y$ by $X=m_1 U + m_2 V$, $Y = m_1U + m_2V$, with coefficients $m_1,m_2,n_1,n_2$ to be determined, $X^2+bXY+cYX+dY^2$ becomes
			\begin{align*}
				&(am_1^2+(b+c)m_1n_1+dn_1^2)U^2 \\+& (am_1m_2+bm_1n_2+cm_2n_1+dn_1n_2)UV \\+&
				(am_1m_2+bm_2n_1+cm_1n_2+dn_1n_2)VU \\+& am_2^2+(b+c)m_2n_2+dn_2^2)V^2 =0
			\end{align*}
			In order to get rid of $U^2$ and $V^2$ terms while keep $X,Y$ non-commuting, $\frac{m_1}{n_1}$ and $\frac{m_2}{n_2}$ is chose to be the different roots of $az^2+(b+c)z+d=0$. Then $\{(X,Y)|aX^2+bXY+cYX+dY^2=0\} \simeq \{(U,V)|pUV+qVU=0\}$, by the change of variable $(U,V)^T = (\begin{matrix}
				m_1 &m_2\\
				n_1 &n_2
			\end{matrix})^{-1} (X,Y)^T$, where $p = am_1m_2+bm_1n_2+cm_2n_1+dn_1n_2$ and $q = am_1m_2+bm_2n_1+cm_1n_2+dn_1n_2$ are both non-zero since $aX^2+bXY+cYX+dY^2$ is irreducible. By Theorem (\ref{q ru}) and Theorem (\ref{q nru})corollary is proved.
		\end{proof}
		\begin{remark}
			Notice that if we rewrite $XY - qYX$ into $XY - f(Y)X$ and $f(Y) = qY$, then the $Y$ component of solution with largest dimension can be written as 
			\begin{equation*}
				PYP^{-1} = diag\big(\lambda_1, f(\lambda_1), f\circ f(\lambda_1),\ldots,f^{(n-1)}(\lambda_1),\ldots,\lambda_{d/n},\ldots,f^{(n-1)}(\lambda_{d/n})\big)
			\end{equation*}
			with $f^{(n)}$ denotes $f$ composes with itself for n times. And $X$ could have non-zero entry at n,1-entry for each block on diagonal because $f^{(n)} = id$. Since the Mobius transform $g(z) = \frac{az+b}{cz+d}$ satisfy $g^{(n)} = id$ when $\begin{pmatrix}  a&b\\c&d  \end{pmatrix}^n = I$, one could notice that 
			\begin{equation*}
				PYP^{-1} = diag\big(\lambda_1, f(\lambda_1), f\circ f(\lambda_1),\ldots,f^{(n-1)}(\lambda_1),\ldots,\lambda_{d/n},\ldots,f^{(n-1)}(\lambda_{d/n})\big)
			\end{equation*}
			and
			\begin{equation*}
				PXP^{-1} = \begin{pmatrix} 0& z_1^1&&\\&0&z_1^2&\\&& \ddots&\\z_1^{n}&&&0 \end{pmatrix}\oplus\begin{pmatrix} 0& z_2^1&&\\&0&z_2^2&\\&& \ddots&\\z_2^{n}&&&0\end{pmatrix}\oplus \cdots \oplus \begin{pmatrix} 0& z_{d/n}^1&&\\&0&z_{d/n}^2&\\&& \ddots&\\z_{d/n}^{n}&&&0\end{pmatrix}
			\end{equation*}
			forms a solution to $(cY+d)XY-(aY+b)X = 0$, which has dimension $d^2 + \lfloor \frac{d}{n} \rfloor$. Thus the zero locus of $(cY+d)XY-(aY+b)X$ has at least dimension $d^2 + \lfloor \frac{d}{n} \rfloor$.
		\end{remark}

\printbibliography
\end{document}